\newtheoremstyle{standard}
 {16pt}  
 {16pt}  
 {}  
 {}  
 {\bfseries}
 {}  
 { } 
 {{{\thmnumber{#2.~}\thmname{#1}}}\thmnote{~(#3)}} 
\newtheoremstyle{kursiv}
 {16pt}  
 {16pt}  
 {\it}  
 {}  
 {\bfseries}
 {}  
 { } 
 {{{\thmnumber{#2.~}\thmname{#1}}}\thmnote{~(#3)}} 
\theoremstyle{standard}
\newtheorem{defn} [subsection]{Definition}
\theoremstyle{kursiv}
\newtheorem{thm}[subsection]{Theorem}
\newtheorem{cor} [subsection]{Corollary}
\newtheorem{lem} [subsection]{Lemma}
\numberwithin{equation}{section}
\newcommand{\aA}{\ensuremath{\mathcal{A}}}
\newcommand{\cC}{\ensuremath{\mathcal{C}}}
\newcommand{\sS}{\ensuremath{\mathcal{S}}}
\newcommand{\set}[1]{\{#1\}} 
\newcommand{\setm}[2]{\{\, #1\mid#2\,\}} 
\newcommand{\tl}{\textquotedblleft}
\newcommand{\tr}{\textquotedblright \ }
\newcommand{\op}{^{\rm op}}
\newcommand{\coloneq}{\mathrel{\mathop:}=}
\DeclareMathOperator{\Mod}{{\bf Mod}}
\DeclareMathOperator{\Ab}{{\bf Ab}}
\DeclareMathOperator{\Coker}{{Coker}}
\DeclareMathOperator{\Pres}{{\bf Pres}}
\DeclareMathOperator{\Hom}{Hom}
\DeclareMathOperator{\im}{Im}
\DeclareMathOperator{\Ob}{Ob}
\title{A construction of relatively pure submodules}
\author{Alexander Schmeding}
\date{}
\begin{document}
\thispagestyle{empty}
\maketitle
\abstract{In this paper we reconsider a classical theorem by Bican and El Bashir, which guarantees the existence of non-trivial relatively pure submodules in a module category over a ring with unit. Our aim is to generalize the theorem to module categories over rings with several objects. As an application we then consider the special case of $\alpha$-pure objects in such module categories.}

\section{Introduction}
In \cite{EBB2001} the flat cover conjecture for module categories has been solved positively, giving two independent approaches to prove the existence of flat covers. The first one due to Enochs uses certain properties of cotorsion theories. It seems to be better understood than the second one and has been used to generalize the existence theorems for flat covers to other categories; see for instance \cite{Aldrich2001}. The second method due to Bican and El Bashir is based on a complex set theoretic argument, where relatively pure submodules are constructed. This line of thought has been generalized to Grothendieck categories in \cite[Thm. 2.1]{bashir2}, where a different proof has been given. To our knowledge these argument have not been used very widely, although there are several applications as \cite{hk2010} and \cite{jh2008} show. The reluctance to use these arguments seems to be related to the fact that the proof given in \cite{EBB2001} is short and difficult to understand. We try to remedy this fact by giving a detailed explanation of the proof and several corollaries. The main theorem \ref{gen_thm} is a slight generalizations of the classical theorem by Bican and El Bashir. As our goal is to popularize the approach of Bican and El Bashir we show how to deduce this result from the classical theorem. In addition an interesting application will be discussed in section \ref{sect: appl}. However, 
the main results were already known (i.e. they appeared as \cite[Theorem 5]{EBB2001}, resp. \cite[Theorem 2.1]{CPT}). Nevertheless, the author thinks that detailed proofs are indispensable to understand these important theorems. Hence our aim is to remedy the lack of such proofs for these important results.

\paragraph{Acknowledgment.}
I would like to thank Henning Krause for stimulating discussions on the subject of this work.

\section{Projectively generated purities}
We fix an arbitrary small additive category $\cC$ and let $\sS$ be a skeleton for this category, i.e. a set of representatives of the objects $\Ob \cC$.

\begin{defn}
Denote by $\Mod \cC$ the category of $\cC$-modules, i.e. the additive functors $\cC\op \rightarrow \Ab$ with values in the abelian groups. Given $X \in \Mod \cC$, we define the \emph{cardinality of a module} $\lvert X\rvert \coloneq \sum_{C \in \sS} \lvert X(C)\rvert$.
\end{defn}

\begin{defn}[projectively generated purities]\label{defn: projgen}
$\aA$ be a class of  modules in $\Mod \cC$. We consider the class $\sigma_\aA$ of all short exact sequences in $\Mod \cC$, for which all $A \in \aA$ are projective, i.e. for every $\eta \in \sigma_\aA$ the induced sequence $\Hom_{\Mod \cC} (A , \eta )$ is exact in $\Ab$ for all $A \in \aA$. The class $\sigma_\aA$ will be called a \textit{purity projectively generated by $\aA$}.\\
A submodule $L\,\unlhd\, M$ of a module $M \in \Mod \cC$ is called \textit{$\sigma_\aA$-pure}, if the short exact sequence $\eta \colon 0 \rightarrow L \rightarrow M \rightarrow N \rightarrow 0$ is taken to an exact sequence in $\Ab$ by the functor $\Hom_{\Mod \cC} (A ,- )$ for every $A \in \aA$.  
\end{defn}
We remind the reader that the representable functors $\Hom (\cdot, A) , \ A \in \cC$ form a generating set of projective objects for $\Mod \cC$. Thus $\Mod \cC$ has enough projective objects. The next Lemma is preparatory in nature and will come in handy to prove the theorem of Bican and El Bashir:

\begin{lem}\label{lem: prep}
Let $\aA$ be some set of modules in $\Mod \cC$ and $N\, \unlhd\, M$ be in $\Mod \cC$. Fix for every $A \in \aA$ a short exact sequence $\rho_A \colon 0 \rightarrow K \stackrel{\iota}{\rightarrow} P \rightarrow A \rightarrow 0$ with $P$ some projective object. The submodule $N$ is $\sigma_\aA$-pure, if and only if for every pair of morphisms $s \colon K \rightarrow N$ and $s' \colon P \rightarrow M$ inducing a commutative diagram  
\begin{equation}\label{diag: sigpur}
\begin{split}
	\begin{xy}
	\xymatrix{
		K \ar[r]^{\iota} \ar[d]_{s} & P \ar[d]^{s'} \\     
		N \ar[r] & M
	}
	\end{xy}
\end{split}	
\end{equation}  
 there is a homomorphism $r \colon P \rightarrow N$, such that $r \iota = s$. 
\end{lem}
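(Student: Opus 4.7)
The plan is to reformulate $\sigma_\aA$-purity directly in terms of surjectivity of the map $\Hom_{\Mod \cC}(A,M) \to \Hom_{\Mod \cC}(A,M/N)$ induced by the canonical projection. Since $\Hom_{\Mod \cC}(A,-)$ is always left-exact, $N$ is $\sigma_\aA$-pure if and only if, for every $A \in \aA$, every morphism $f \colon A \to M/N$ lifts to some $\tilde f \colon A \to M$. The fixed presentations $\rho_A \colon 0 \to K \xrightarrow{\iota} P \xrightarrow{\pi} A \to 0$ are what translate this lifting condition into the diagrammatic condition in \eqref{diag: sigpur}, and the proof is essentially the interplay between the projectivity of $P$ and the surjectivity of $\pi$.

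For the forward direction, I would start with a commutative square as in \eqref{diag: sigpur}. Composing $s'$ with $M \thra M/N$, the composite vanishes on $\iota(K)$ because the square factors that composition through $N$, so it descends to a morphism $\bar s \colon A \to M/N$ with $\bar s \pi = (M\thra M/N)\circ s'$. Assuming $\sigma_\aA$-purity, I can lift $\bar s$ to some $\tilde s \colon A \to M$. Then the difference $s' - \tilde s \pi \colon P \to M$ maps into $N \unlhd M$, yielding $r \colon P \to N$. A direct check using $\pi \iota = 0$ and the commutativity of \eqref{diag: sigpur} shows $r \iota = s$.

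For the converse, take any $f \colon A \to M/N$. Since $P$ is projective, $f \pi$ lifts through $M \thra M/N$ to some $g \colon P \to M$. Now $g \iota \colon K \to M$ has image in $N$ (its composition with the projection to $M/N$ is $f \pi \iota = 0$), so it factors as $K \xrightarrow{s} N \hookrightarrow M$, and with $s' \coloneq g$ the square \eqref{diag: sigpur} commutes. By hypothesis there is $r \colon P \to N$ with $r \iota = s$; then $g - r \colon P \to M$ (viewing $r$ into $M$ via the inclusion) vanishes on $\iota(K)$ and therefore descends to $\tilde f \colon A \to M$. One verifies that $\tilde f$ lifts $f$ by comparing the two sides after precomposition with the epimorphism $\pi$.

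The only delicate point is bookkeeping: one must be disciplined about whether $r$ is regarded as landing in $N$ or in $M$, and repeatedly use that $\pi$ is an epimorphism to cancel it. There is no genuine obstacle beyond this, since everything reduces to a diagram chase exploiting the projectivity of $P$ on one side and the chosen presentation $\rho_A$ on the other.
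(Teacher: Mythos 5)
Your proof is correct and follows essentially the same route as the paper: reformulate $\sigma_\aA$-purity as surjectivity of $\Hom(A,M)\rightarrow\Hom(A,M/N)$, then trade lifts of $A\rightarrow M/N$ against lifts of $K\rightarrow N$ via the presentation $\rho_A$ and the projectivity of $P$. The only difference is cosmetic: where the paper twice invokes a ``homotopy Lemma'' to produce the required morphisms, you carry out the corresponding null-homotopy computations (the differences $s'-\tilde s\pi$ and $g-r$) explicitly, which is fine.
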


\begin{proof}
 Consider the commutative diagram 
\begin{displaymath}
	\begin{xy}
	\xymatrix{
		\rho_A \colon &    0 \ar[r] & K \ar[r]^a \ar[d]^s & P \ar[r]^b \ar[d]^{s'} & A \ar[r] \ar@{.>}[d]^{s''} & 0 \\
		\eta \colon   &	0 \ar[r]&  N \ar[r]^\iota & M \ar[r]^\pi & \Coker \iota \ar[r] & 0
	}
	\end{xy}
\end{displaymath} 
$s''$ is the canonical morphism induced by the cokernel. If $N$ is $\sigma_\aA$-pure, $\Hom (A, \eta)$ is an exact sequence in $\Ab$ and we derive a morphism $k \colon A \rightarrow M$, such that $\pi k = s''$. 
The required morphism exists by the homotopy Lemma (cf. \cite[Lemma B1]{model1994}. Conversely let $N$ be module with the above property and we wish to show that $\Hom_{\Mod \cC} (A , \eta)$ is exact. Since the functor is left exact it suffices to show that $\Hom (A, \pi)$ is an epimorphism. To this end let $f \colon A \rightarrow \Coker \iota$ be an arbitrary morphism. Now $P$ is projective, which implies that there is a morphism $k \colon P \rightarrow M$ with $\pi k = fa$ and since $N$ is the kernel of $\pi$, there is another morphism $k'\colon K \rightarrow N$ such that $\iota k' = k a$. The module $N$ has the above property with respect to the pair of morphisms $k',k$, there is $r \colon P \rightarrow N$ such that $ra = k'$. Again by the Homotopy Lemma, there is a morphism $w \colon A \rightarrow M$ such that $\Hom(A, \pi) (w) = f$ proving our claim.
\end{proof}

\section{Revisiting the approach of Bican and El Bashir to flat covers}

Throughout this section we are working in a module category $\Mod R$ over some (unital) ring $R$. We may think of $R$ as a category with a single object and therefore $\Mod R$ is in a canonical way a functor category over a small additive category with one object. Viewing $\Mod R$ as a special case of $\Mod \cC$ we may consider the objects defined in the last section in the setting of a module category. It is then easy to see that the notion of projectively generated purity coincides with the definition of Bican and El Bashirs paper (cf. \cite{EBB2001}). We may now formulate the main Theorem due to Bican and El Bashir: 

\begin{thm}\label{thm: bb1}
Let $R$ be an arbitrary (unital) ring and $\sigma$ a purity projectively generated by some set of modules $\aA$. For every cardinal $\lambda$ there is a cardinal $\kappa$, such that for every module $M$ in $\Mod R$ and $L\, \unlhd\, M$ with $\lvert M\rvert \geq \kappa$ and $\lvert M/L \rvert \leq \lambda$ the submodule $L$ contains another non trivial submodule, which is $\sigma$-pure in $M$.
\end{thm}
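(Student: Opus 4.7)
The plan is to follow the original Bican--El Bashir strategy: produce ``small'' $\sigma_\aA$-pure submodules of $M$ via a transfinite closure procedure based on Lemma~\ref{lem: prep}, and then exploit the gap between $|M|$ and $|M/L|$ by a cardinality/pigeonhole argument to locate one such submodule entirely inside $L$. As setup, I fix, for every $A \in \aA$, a short exact sequence $\rho_A \colon 0 \rightarrow K_A \stackrel{\iota_A}{\rightarrow} P_A \rightarrow A \rightarrow 0$ with $P_A$ projective, and introduce the auxiliary cardinal
\[
\mu := \max\bigl\{\aleph_0,\, \lambda,\, |\aA|,\, \textstyle\sup_{A \in \aA}|P_A|\bigr\}.
\]
The cardinal $\kappa$ is fixed at the end of the counting argument; a choice of the shape $\kappa \geq (\lambda^{2^\mu})^+$ will suffice.

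The first technical ingredient is a pure-closure lemma: for every subset $X \subseteq M$ with $|X| \leq \mu$ there exists a $\sigma_\aA$-pure submodule $X^* \subseteq M$ with $X \subseteq X^*$ whose cardinality is bounded solely in terms of $\mu$ (with a crude bound $|X^*| \leq 2^\mu$ coming from the construction). I build $X^*$ by transfinite recursion of length $\mu$, starting with $X_0 := \langle X\rangle$. At a successor step I apply Lemma~\ref{lem: prep}: for every $A \in \aA$ and every commutative square of the form \eqref{diag: sigpur} with $N = X_\alpha$, I adjoin the image $s'(P_A)$ to $X_\alpha$, so that $r := s'$ becomes a lift into $X_{\alpha+1}$. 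Limit stages are handled by unions, and a straightforward cardinal arithmetic estimate controls $|X^*|$. Applied to $X = \{x\}$ for each nonzero $x \in L$, this produces a nonzero $\sigma_\aA$-pure submodule $\langle x\rangle^* \subseteq M$ of bounded cardinality, whose image $\pi(\langle x\rangle^*)$ under the quotient $\pi \colon M \rightarrow M/L$ is a submodule of $M/L$ of the same cardinality bound.

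The final step is a pigeonhole on the map $x \mapsto \pi(\langle x\rangle^*)$. Since $|M/L| \leq \lambda$, the number of possible images is at most $\lambda^{2^\mu}$. If for some nonzero $x \in L$ the image is zero, then $\langle x\rangle^* \subseteq L$ is the desired nontrivial $\sigma_\aA$-pure submodule and we are done. Otherwise, choosing $\kappa$ strictly larger than this bound forces — by pigeonhole, using that $|L|$ is still of order $|M| \geq \kappa$ for infinite cardinals — the existence of many distinct $x, y \in L$ whose pure closures project to the \emph{same} nonzero submodule $T \subseteq M/L$. A subsequent difference/intersection argument — exploiting $x - y \in L$ together with the coincidence $\pi(\langle x\rangle^*) = \pi(\langle y\rangle^*)$ — is then used to extract a nontrivial $\sigma_\aA$-pure submodule of $L$.

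The main obstacle — and the reason this theorem has a reputation for a short but opaque proof — is precisely this last extraction step. The pure-closure construction is a routine transfinite recursion once Lemma~\ref{lem: prep} is available, and the cardinal bookkeeping is only arithmetic; the real difficulty is the set-theoretic argument that converts the abundance of small $\sigma_\aA$-pure submodules of $M$ with bounded image in $M/L$ into a single such submodule whose image in $M/L$ is actually zero. This is where the precise value of $\kappa$ in terms of $\lambda$ and the presentation data of $\aA$ gets pinned down, and where the core of the Bican--El Bashir argument must be reproduced in detail.
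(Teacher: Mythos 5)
Your proposal correctly identifies the two standard ingredients (a pure-closure construction based on Lemma \ref{lem: prep}, and a cardinality argument exploiting $\lvert M\rvert \geq \kappa$ against $\lvert M/L\rvert \leq \lambda$), and the pure-closure lemma itself is essentially sound (though the recursion should run for $\mu^+$ rather than $\mu$ steps, so that every morphism $K_A \rightarrow X^*$ factors through some stage). However, there is a genuine gap exactly where you place the ``main obstacle'': the final extraction step is not an implementation detail but the entire content of the theorem, and the sketch you give for it does not work. From $x\neq y$ in $L$ with $\pi(\langle x\rangle^*)=\pi(\langle y\rangle^*)=T\neq 0$ you only learn that $\langle x\rangle^*+L=\langle y\rangle^*+L$; neither $\langle x\rangle^*\cap\langle y\rangle^*$, nor $\langle x\rangle^*\cap L$, nor the pure closure of $x-y$ (which is just another element of $L$, no better placed than $x$ itself) is forced to be a $\sigma_\aA$-pure submodule of $M$ lying inside $L$ --- intersections of $\sigma_\aA$-pure submodules need not be $\sigma_\aA$-pure, and a single pigeonhole on images in $M/L$ produces no submodule of $L$ at all. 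As written, the proposal establishes only the purification lemma and defers the theorem.

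The paper's proof is organized quite differently and avoids pure closures altogether. It argues by contradiction: assuming $L$ contains no non-trivial $\sigma_\aA$-pure submodule of $M$, it attaches to $\kappa=(\lambda^\mu)^+$ chosen elements a transfinite sequence of \emph{heights} --- sets of \emph{formal solutions}, i.e.\ purely syntactic records $Y_A\times\set{h}$ of a single purification step, of which there are only $\Xi\leq\aleph$ in total, independently of $M$ --- together with realizations of these heights in $M$. At each of $\Xi^+$ stages one passes to differences $f_{\delta\beta}-f_{\delta\min T}$ landing in $L$, and the assumed non-purity of the submodule generated by the image of the resulting realization forces, via Lemma \ref{lem: prep}, a \emph{strict} enlargement of the height; an equivalence-relation/partition count bounded by $\lambda^\mu$ at every stage guarantees that some index survives all $\Xi^+$ stages. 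That index carries a strictly increasing chain of heights of length $\Xi^+$, hence $\Xi^+$ distinct formal solutions, contradicting the bound $\Xi$. It is this iterated ``subtract the class representative, purify, and recount'' loop --- not a one-shot pigeonhole followed by a difference trick --- that converts $\lvert M/L\rvert\leq\lambda$ into a pure submodule inside $L$; this is precisely the piece your proposal still has to supply.
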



\begin{proof}[{Proof of Theorem \ref{thm: bb1}}]
In order to avoid problems later on, we may assume that the set $\aA$ is non-empty and contains at least one non trivial $R$-module, i.e. $\aA \neq \set{0}$. For every $A \in \aA$ fix a short exact sequence $0 \rightarrow U_A \rightarrow V_A \rightarrow A \rightarrow 0$ with a projective module $V_A$. Choose a set of elements $X_A$ which generates $U_A$ and a set $Y_A$ generating $V_A$ as an $R$-module. As a technical requirement we may assume without loss of generality, that $X_A$ and $Y_A$ are pairwise disjoint as sets for every $A$. We shall now outline the concept of the proof:

\paragraph{Leading idea:} A well known fact is that every submodule may be embedded into a pure submodule which may be constructed by adding suitable elements to the submodule (this process is called \tl purification\tr cf. \cite[Lemma 2.1.21]{prest2009}). We shall use the characterization of pure submodules as in Lemma \ref{lem: prep} to pursue exactly this line of thought. Since we fixed a set of generating elements $Y_A$ for every $V_A$ and $A \in \aA$, Lemma \ref{lem: prep} shows that every element which has to be added to an arbitrary (non-pure) submodule is contained in a homomorphic image of one of the $Y_{A}$. To use these observation we will at first introduce a way to count the homomorphic images of the generating sets of $V_A$ which we need to add to a submodule. It will turn out that there is an upper bound for the number of choices necessary to purify an arbitrary submodule. In a second step we construct cardinals which satisfy the properties claimed in the theorem. 
    
\paragraph{1st step:} In order to count the number of distinct choices for elements which are joined to a (non-pure) submodule we want to define so called \textit{formal solutions}. One may think of a formal solution as a counting variable which uniquely identifies each element in the purification process of a submodule but does not depend on the particular submodule.\\
Let $\aleph$ be an infinite regular cardinal such that $\aleph > \sup \set{\lvert R\rvert , \lvert \aA\rvert , \sup \setm{\lvert X_A\rvert}{A \in \aA}}$. We begin the construction of formal solutions by defining the set $x_0 \coloneq \set{\emptyset}$. The set $x_0$ is chosen just for technical reasons and it is not important what $x_0$ really is (in fact Bican and El Bashir just define $x_0$ to be a \tl new symbol\tr ). The construction of formal solutions is inductively given by the following statements: 
\begin{compactitem}
\item[1.]Let $x_0$ be the set just defined, then we declare that $\set{x_0}$ is a \textit{formal solution}.
\item[2.]We introduce the following notation: For any set of formal solutions $S$ we denote the set $\bigcup_{y \in S} y$ by $\lVert S\rVert $. If $S$ is a set of formal solutions with $\lvert S\rvert \leq \aleph$ and for any $A \in \aA$ there is a map $h\colon X_A \rightarrow \bigoplus_{y \in \lVert S\rVert } R$, then $z = Y_A \times \set{h}$ is a formal solution. 
\end{compactitem} 
Notice that the first statement just gives us a starting point for the inductive definition, and we have assured that there is a distinguished formal solution, which is given as the set containing $x_0$. The formal solutions constructed by inductive application of step 2 should be seen as a way of counting homomorphic images of $X_A$ via the coefficients for every element in $X_A$ together with the generating set $Y_A$. We now have to prove that the class of all formal solutions is a set. This follows from the fact, that for every $A \in \aA$ there are at most $(\aleph \cdot \lvert R\rvert)^{\lvert X_A\rvert} \leq \aleph$ maps from $X_A$ into direct sums of $R$ with less than $\aleph$ summands. For the estimate we used $\aleph > \lvert R\rvert , \lvert X_A \rvert$ and \cite[Thm. 5.20 (iii) (a)]{jechst}, which is applicable since $\aleph$ is a regular cardinal with $\sup_{A \in \aA} \lvert X_A\rvert < \aleph = \text{cf } \aleph$ Therefore there are at most $\sum_{A \in \aA} (\aleph \cdot \lvert R\rvert)^{\lvert X_A\rvert} \leq \lvert \aA \rvert \cdot \aleph = \aleph$ formal solutions which may be generated by step 2. The class of all formal solutions is a set whose cardinality we denote by $\Xi$. As already seen, $\Xi \leq \aleph$ holds. By step one there is at least one formal solution. Combining the assumption that there is at least one non-trivial module in $\aA$ clearly by the inductive application of step 2 there are infinitely many formal solutions and thus $\Xi \geq \aleph_0$.\\
 A (non-empty) set of formal solutions $\chi $ is called \textit{height} if, for any 
	\begin{displaymath}
	y = Y_A \times \{ h\} \in \chi ,\quad h\colon X_A \rightarrow \bigoplus_{\lVert S\rVert } R
	\end{displaymath} 
the condition $S \subseteq \chi$ holds. By definition every height contains the formal solution $\set{x_0}$. Consider a height $\chi$ and some module $M \in \Mod R$ together with a map $f\colon \lVert \chi \rVert \rightarrow M $. The map $f$ is called \textit{realization of $\chi$ in $M$ over $a$}, if the following conditions are satisfied.\newpage 
\begin{compactitem}
\item[1.] $f(x_0) = a$.
\item[2.] for all $y = Y_A \times \set{h} \in \chi , \ h\colon X_A \rightarrow \bigoplus_{\lVert S\rVert } R$ there is a homomorphism $g_A \colon V_A \rightarrow M$, which makes the following diagrams commutative:
\begin{equation}\label{natmor}
\begin{split}
	\begin{xy}
	\xymatrix{
		y =Y_A \times \set{h} \ar[r] \ar[d]^i \ar@{}[dr] |{\rm i)}& \lVert\chi\rVert \ar[d]^f & & X_A \ar[d]^\subseteq \ar[r]^{\hspace{-0.5cm} h} \ar @{} [dr] |{\rm ii)}& \bigoplus_{\lVert S \rVert } R \ar[d]^{\overline{f}} \\
		V_A \ar[r]^{g_A} & M & & U_A \ar[r]^{g_{A_{\mid U_A}}} & M
}
	\end{xy}
\end{split}	
\end{equation} 
Here $i\colon Y_A \times \set{h} \rightarrow V_A,$ is given by $y_A \times \set{h} \mapsto y_A$ and the map $\overline{f}$ is derived from the unique extension property of free modules as 
$\overline{f} (r_y)_{y \in \lVert S \rVert} \coloneq \sum_{y \in \lVert S\rVert} f(y)\cdot r_y$.
\end{compactitem}
We have now assembled all tools necessary to prove the theorem. 
\paragraph{2nd step:} Consider a cardinal $\lambda \geq 2$ and choose a cardinal $\mu \geq \Xi$, with $\lambda^\mu \geq \sup \setm{\lvert Y_A\rvert}{A \in \aA}$.\footnote{Observe that the prove diverges here in a subtle way from the original proof by Bican and El Bashir. In their paper $\mu = \Xi$ holds, but the author does not know how to obtain the estimates necessary later on. However for the purity generated by the finitely presented modules (i.e. the classical case) indeed $\mu = \Xi$ suffices as $\sup \setm{\lvert Y_A\rvert}{A \in \text{Pres}_{\aleph_0} (R)} = \aleph_0 \leq \Xi$.} We define $\kappa = (\lambda^\mu )^{+}$, where $\alpha^{+}$ is the cardinal successor of a cardinal $\alpha$. Let $L\,\unlhd\, M$ be a submodule of $M$, such that $\lvert M\rvert \geq \kappa$ and $\lvert M/L \rvert \leq \lambda$. We give an indirect proof by assuming that there is no non-trivial submodule of $L$ which is $\sigma_\aA$-pure in $M$.\\
To obtain a contradiction we will iteratively construct partitions of $\kappa$. In the end we will show that our assumption implies the existence of $\Xi^+$ formal solutions which is the desired contradiction.
Choose pairwise distinct elements $a_\alpha \in M,\ \alpha < \kappa$. For every $\alpha < \kappa$ define the height $\chi_{\alpha} \coloneq \set{\set{x_0}}$ together with the realization $f_\alpha \colon \lVert \chi_\alpha \rVert \rightarrow M,\ f_\alpha (x_0) = a_\alpha$.\\
Let $\pi \colon M \rightarrow M/L$ be the canonical projection onto the quotient and define a relation \tl $\tau$\tr on $\kappa$ via: $\alpha \tau \beta$ if and only if $\pi f_\alpha = \pi f_\beta$. Clearly $\tau$ is an equivalence relation and we denote by $P(\tau )$ the corresponding  partition of $\kappa$. For an arbitrary $T \in P(\tau )$, let $\min T$ be the smallest ordinal contained in $T$. For any $\beta \in T$ with $\beta > \min T$ the construction of $T$ yields $a_\beta - a_{\min T} \in L$. Thus we obtain a map 
	\begin{displaymath}0 \neq f_\beta - f_{\min T} \colon \lVert \chi_\beta \rVert \rightarrow L.\end{displaymath}
We claim that there exists a height $\chi_{0\beta}$ and a realization $f_{0\beta}$ of $\chi_{0\beta}$ in $M$ over $a_\beta - a_{\min T}$, which properly extends $f_\beta - f_{\min T}$. Consider the following construction: \\
The submodule $M_\beta \coloneq (a_\beta - a_{\min T})R$ is not $\sigma_\aA$-pure in $M$ by our assumption. Thus by Lemma \ref{lem: prep} there exists $A \in \aA$ and morphisms $s\colon U_A \rightarrow M_\beta$ and $s'\colon V_A \rightarrow M$ as in \eqref{diag: sigpur}, such that $s$ does not factor through $\iota$ (from \eqref{diag: sigpur}). We have to construct a height $\chi_{0\beta}$ with $\chi_\beta \subsetneq \chi_{0\beta}$. To this end we wish to construct $y \coloneq Y_A \times \set{\theta}$, where $A$ is the $A$ from above and $\theta$ is a suitable map. We postpone the problem of finding $\theta$ and define first $\chi_{0\beta} \coloneq \set{\set{x_0}, y}$ and a map $f_{0\beta}\colon \lVert \chi_{0\beta} \rVert \rightarrow M$ such that $f_{0\beta \mid\chi_\beta} = f_\beta - f_{\min T}$ and $f_{0\beta} (y_A \times \set{\theta} ) \coloneq s' (y_A)$ hold. By construction the diagram \eqref{natmor} i) with $g_A = s'$ is commutative. For each $x_A \in X_A$ we may choose and fix some (probably non unique) $r_{x_A} \in R$ satisfying $s(x_A) = (a_\beta - a_{\min T})\cdot r_{x_A}$. Now define 
	\begin{displaymath}
	\theta \colon X_A \rightarrow \oplus_{\set{x_0}} R =R \text{ as } \theta(x_A) = r_{x_A} \text{, if }s (x_A)= (a_\beta - a_{\min T})\cdot r_{x_A}.
	\end{displaymath}
 This choice of $\theta$ turns $\chi_{0\beta}$ into a height and the diagram \eqref{natmor} ii) commutes. The realization $f_{0\beta}$ properly extends $f_\beta - f_{\min T}$: If it did not properly extend $f_\beta - f_{\min T}$, one obtains $\im f_{0\beta} \subseteq M_\beta$ and by commutativity of \eqref{natmor} i) we deduce $\im s' \subseteq M_\beta$. The choice of morphisms $s,s'$ implies $s = s' \iota$ (by \eqref{diag: sigpur}). Thus $s$ extends to a morphism $V_A \rightarrow M_\beta$, which contradicts our choice of $s$ and $s'$. 
\paragraph{}
Continuing our construction we consider $\kappa_0 = \kappa \setminus \setm{\min T}{T \in P(\tau )}$ and introduce an equivalence relation \tl $\tau_0$\tr with corresponding partition $P(\tau_0)$ on $\kappa_0$ in the following way: 
\begin{displaymath}
\alpha \tau_0 \beta :\Longleftrightarrow  \alpha \tau \beta ,\ \chi_{0\alpha} = \chi_{0\beta} \text{ and } \pi f_{0\alpha} = \pi f_{0\beta} 
\end{displaymath}
We need an estimate for the number of equivalence classes in $P(\tau_0)$: The requirement $\lvert M/L\rvert \leq \lambda$ yields at most $\lambda$ equivalence classes in $P(\tau)$. Every height $\chi_{0\beta}$ contains exactly two formal solutions, one being $\set{x_0}$ by construction. The set of all formal solutions contains $\Xi \leq \mu$ elements. Hence there are at most  $\lambda^\mu$ distinct choices for classes, which satisfy the first two requirements of $\tau_0$. Finally we have to compute how many choices are added for each class by the third condition of $\tau_0$. To this end choose realizations $f_{0\alpha}, f_{0\beta}$ such that $\alpha \tau \beta$ and $\chi_{0\alpha} = \chi_{0\beta} = \set{\set{x_0}, Y_A \times \set{\theta}}$ holds. 
For each $z \in Y_A$ there are $\lambda$ choices for $\pi f_{0\alpha} (z)$, resp. $\pi f_{0\beta} (z)$. By choice of $\mu$, 
there are at most $\lambda^\mu$ different choices to satisfy the third requirement. 
Thus $P(\tau_0) = \lambda^\mu \cdot \lambda^\mu = \lambda^\mu$ holds. The complement $\setm{\min T}{T \in P(\tau )}$ of $\kappa_0$ in $\kappa$ contains at most $\lambda$ different elements, since $\alpha \tau \beta$ iff $\pi f_\alpha = \pi f_\beta$ and $\lvert M/L\rvert \leq \lambda$. For later use we remark, that the complement is non-empty and contains at most $\lambda^{\mu}$ elements.\\
We shall continue the construction of  partitions via transfinite induction with $\Xi^+$ steps. Choose the equivalence relation $\tau_0$ as start of the induction and observe that isolated and limit steps can be formulated in the same way. \\
Let $0 < \gamma < \Xi^+$ and assume that for all $\delta < \gamma$ the sets $\kappa_\delta$, the heights $\chi_{\delta \beta}$ and their realizations $f_{\delta \beta}$ for $\beta \in \kappa_\delta$ as well as the equivalences $\tau_\delta$ on $\kappa_\delta$ have already been constructed, such that the following is satisfied:
\begin{compactitem}
\item[1.] $\kappa_\delta = \bigcap_{\delta_1 < \delta} \kappa_{\delta_1} \setminus \setm{\min T}{T \in P (\bigcap_{\delta_1 < \delta} \tau_{\delta_1})}$.
\item[2.] The equivalence $\tau_\delta$ on $\kappa_\delta$ is  defined by $\alpha \tau_\delta \beta :\Longleftrightarrow \alpha \tau_{\delta_1} \beta \ \forall \delta_1 < \delta$, $\chi_{\delta \alpha} = \chi_{\delta \beta}$ and $\pi f_{\delta \alpha} = \pi f_{\delta \beta}$.
\item[3.] $\bigcup_{\delta_1 < \delta} \chi_{\delta_1 \beta} \subsetneq \chi_{\delta \beta}$.
\item[4.] $f_{\delta \beta}$ properly extends $f_{\delta_1 \beta} - f_{\delta_1 \min T}$ for all $\delta_1 < \delta$, $\beta \in \kappa_\delta$ and $\beta \in T \in P (\bigcap_{\delta_1 < \delta} \tau_{\delta_1})$,
\end{compactitem}
Since $\mu \geq \Xi \geq \aleph_0$ and $\lambda >1$, standard cardinal arithmetic shows $\mu + \mu = \mu \cdot \mu = \mu,\ \mu < \lambda^\mu$ and $(\lambda^\mu )^\mu = \lambda^\mu$ (cf. \cite[Thm. 5.20 (iii)]{jechst}). We may now extend every height with at most $\Xi$ distinct elements and the definition of the partitions for $\gamma < \Xi^+ \leq \mu^+$ allows the estimate:
\begin{displaymath}
 \biggl\lvert  P(\bigcap_{\gamma_1 < \gamma} \tau_{\gamma_1}) \biggm\rvert \leq \biggm\lvert \prod_{\gamma_1 < \gamma} P(\tau_{\gamma_1}) \biggr\rvert  \leq (\lambda^\mu)^\mu = \lambda^\mu.
\end{displaymath}
Set $\kappa_\gamma \coloneq \bigcap_{\delta < \gamma} \kappa_\delta \setminus \setm{\min T}{T \in P(\bigcap_{\delta_1 < \gamma} \tau_{\delta})}$. Using the above estimates we compute 
	\begin{align*}
	 \lvert \kappa \setminus \kappa_\gamma \rvert &= \left\lvert \left(\bigcup_{\delta < \gamma} \kappa \setminus \kappa_\delta \right) \cup \setm{\min T}{T \in P(\bigcap_{\delta < \gamma} P(\tau_{\delta}))} \right\rvert \\ 
	  &\leq \left \lvert \left(\bigcup_{\delta < \gamma} \kappa \setminus \kappa_\delta \right) \right\rvert + \left\lvert P(\bigcap_{\delta < \gamma} P(\tau_{\delta})) \right\rvert \leq  \Xi \cdot \lambda^\mu + \lambda^\mu \leq \mu \lambda^{\mu} + \lambda^\mu =\lambda^\mu.
	\end{align*}
In particular the estimate implies, that $\kappa_\gamma$ is non-empty, since otherwise $\lvert \kappa \setminus \kappa_\gamma\rvert = \kappa = (\lambda^\mu)^+$ holds, which contradicts our last estimate. 
Let $\beta \in T \in P(\bigcap_{\delta < \gamma} \tau_\delta )$ with $\beta > \min T$. Choose classes $T_\delta \in P(\bigcap_{\delta_1 < \delta} \tau_{\delta_1}) ,\ \delta < \gamma $ such that $T \subseteq \bigcap_{\delta < \gamma} T_\delta$.
For each $\delta_1 < \delta < \gamma$ we have $\chi_{\delta_1 \beta} = \chi_{\delta_1 \min T}$ by 2. and by using 4. the map $f_{\delta \beta}$ properly extends $f_{\delta_1 \beta} - f_{\delta_1 \min T_\delta}$, while $f_{\delta \min T}$ properly extends $f_{\delta_1 \min T} - f_{\delta_1 \min T_\delta}$.\\
Thus $f_{\delta \beta} - f_{\delta \min T} \colon \lVert \chi_{\delta  \beta}\rVert \rightarrow L$ properly extends
\begin{displaymath}
(f_{\delta_1 \beta} - f_{\delta_1 \min T_\delta})-(f_{\delta_1 \min T} - f_{\delta_1 \min T_\delta})=f_{\delta_1 \beta} - f_{\delta_1 \min T} \colon \lVert \chi_{\delta_1 \beta}\rVert \rightarrow L.
\end{displaymath} 
 In particular this shows that   
\begin{displaymath}
f \coloneq \bigcup_{\delta < \gamma} (f_{\delta \beta} - f_{\delta \min T}) \colon \left\lVert \bigcup_{\delta < \gamma} \chi_{\delta \beta}\right\rVert \rightarrow L 
\end{displaymath}
is a well-defined map and a realization of the height $\bigcup_{\delta < \gamma} \chi_{\delta \beta}$. The map $f$ is a non trivial morphism since $f(x_0) = a_\beta - a_{\min T} \neq 0$. We conclude that the submodule $0 \neq \langle \im f \rangle \,\unlhd\, L$, which is generated by $\im f$ can not be $\sigma_\aA$-pure in $M$ by our assumption.\\
 Now we wish to construct a height together with a realization which properly extends the maps already constructed. Since the submodule is not $\sigma_\aA$-pure, we may again choose some $A \in \aA$ and a pair of morphisms $s \colon U_A \rightarrow \langle \im f\rangle$, $s'  \colon V_A \rightarrow M$, such that $s$ does not factor through the inclusion $U_A \rightarrow V_A$. We shall now argue as at the start of the induction, claiming that the argument is valid for arbitrary $0 < \gamma < \Xi^+$. Indeed we have only used the properties $\im s\, \unlhd\, \langle \im f \rangle$ and $\lvert S\rvert \leq \aleph$. For arbitrary $0 < \gamma < \Xi^+$ these properties are clearly satisfied, as $\Xi \leq \aleph$ holds. It remains to show that there is a map $h$, such that the diagram \eqref{natmor} ii) commutes (completing the construction of the height).\\
We remark that since $\lvert X_A \rvert \leq \aleph$, for all $A \in \aA$ we may clearly choose $h$, such that $\lvert S\rvert \leq \aleph$ and the diagram \eqref{natmor} ii) commutes for this choice of $h$ and $g_A \coloneq s$. Now we need to assure that the formal solution $X_A \times \set{h}$ induced by $h$ is not contained in $\bigcup_{\delta < \gamma} \chi_{\delta \beta}$. Assuming the contrary, the definition of a height implies that there is a commutative diagram of type \eqref{natmor} ii) for some $g_A$. We derive $g_{A\mid U_A} = f h = f_{\gamma \beta}  h = s$ and thus $s$ factors through $U_A\rightarrow V_A$ contradicting our choice of $s$.\\ 
We have constructed a height $\chi_{\gamma \beta}$ with $\bigcup_{\delta < \gamma} \chi_{\delta \beta} \subsetneq \chi_{\gamma \beta}$ together with a realization $f_{\gamma \beta}$ of $\chi_{\gamma \beta}$. The constructed map properly extends $f_{\delta \beta} - f_{\delta \min T}$ for all $\delta < \gamma$.\\
Define another equivalence relation $\tau_\gamma$ on $\kappa_\gamma$ via 
\begin{displaymath}
 \alpha \tau_\gamma \beta :\Longleftrightarrow \alpha \tau_\delta \beta ,\ \text{for all } \delta < \gamma \text{ one has } \chi_{\gamma \alpha} = \chi_{\gamma \beta} \mbox{ and } \pi f_{\gamma \alpha} = f_{\gamma \beta}.
\end{displaymath}
The cardinality of the complement of $Z \coloneq \bigcap_{\gamma < \Xi^+} \kappa_\gamma $ in $\kappa$ is bounded by $\Xi^+ \cdot \lambda^\mu = \lambda^\mu$. Analogous to the induction step, one may argue that $Z$ is non-empty: If it were empty, its complement $\kappa \setminus Z$ would be all of $\kappa$, hence $\lambda^\mu \geq \lvert \kappa \setminus Z \rvert = \kappa = (\lambda^\mu)^+$ which is absurd. The heights $\chi_{\gamma \beta}$ with $\beta \in \bigcap_{\gamma < \Xi^+} \kappa_\gamma $ and $\gamma < \Xi^+$ form an ascending chain of length $\Xi^+$. By construction $\chi_{\delta \beta} \subsetneq \chi_{\gamma \beta}, \ \forall \delta < \gamma$. Therefore there have to be at least $\Xi^+$ formal solutions, contradicting our choice of $\Xi$. 
\end{proof}

The idea used in the proof of Theorem \ref{thm: bb1}, is, roughly speaking, to purify a submodule by adding suitable elements and counting the necessary steps. In the next section we will discuss a slight generalization of the previous theorem to functor categories.    

\section{Purities for rings with several objects}

In this section we study the notion of purity in the category of additive functors over an additive category. We shall use functor rings which were introduced by Gabriel in \cite{Gabriel62}. Throughout this section $\cC$ will again denote a small additive category.

\begin{defn}
Define the functor ring associated to the category $\cC$ with skeleton $\sS$: 
	\begin{displaymath}
		R_\cC \coloneq \bigoplus_{A,B \in \sS} \Hom_{\cC} (A,B)
	\end{displaymath}
Recall from \cite[Chapitre II]{Gabriel62} that $R_\cC$ is a ring with enough orthogonal idempotents (but possibly without a unit), and that $R_\cC$-modules are defined in the usual way adding the condition $M.R_\cC = M$.
\end{defn}

It is a classical result by Gabriel (see \cite[Chapitre II, Proposition 2]{Gabriel62}) that there is an equivalence of categories $S \colon \Mod \cC \rightarrow \Mod R_\cC$, which preserves the cardinality:
	\begin{displaymath}
		\lvert X\rvert = \lvert S(X)\rvert , \forall X \in \Mod \cC,
	\end{displaymath}
 where the cardinality on the right hand side is the cardinality of the underlying set. Using this equivalence it is easy to prove a generalization of theorem \ref{thm: bb1}. The theorem appeared in this generality first without proof in \cite[Theorem 2.1]{CPT}, where it is used to generalized the flat cover theorem.  	

\begin{thm}\label{gen_thm}
Let $\cC$ be an arbitrary small additive category together with a purity $\sigma$ projectively generated by a set $\aA \subseteq \Ob \Mod \cC$. For every cardinal $\lambda$ there is a cardinal $\kappa$, such that for every module $M \in \Mod \cC$ and $L\, \unlhd\, M$ with $\lvert M\rvert \geq \kappa$ and $\lvert M/L \rvert \leq \lambda$ the submodule $L$ contains a non-trivial submodule which is $\sigma$-pure in $M$.
\end{thm}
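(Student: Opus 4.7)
The plan is to reduce Theorem \ref{gen_thm} to Theorem \ref{thm: bb1} using Gabriel's equivalence $S\colon \Mod \cC \to \Mod R_\cC$. As $S$ is an equivalence of abelian categories, it preserves exactness and sends projectives to projectives, so it carries $\sigma_\aA$ to $\sigma_{S(\aA)}$ and pulls $\sigma_{S(\aA)}$-pure submodules back, through $S^{-1}$, to $\sigma_\aA$-pure submodules. Combined with the cardinality identity $\lvert X \rvert = \lvert S(X) \rvert$ recorded above, the data $(M, L, \aA)$ satisfying the hypotheses of Theorem \ref{gen_thm} is transported to data $(S(M), S(L), S(\aA))$ in $\Mod R_\cC$ satisfying the analogous hypotheses, and any non-trivial $\sigma_{S(\aA)}$-pure submodule of $S(L)$ found there yields the non-trivial $\sigma_\aA$-pure submodule of $L$ required by the theorem.

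The remaining point is that Theorem \ref{thm: bb1} is stated for unital rings, while $R_\cC$ is only a ring with enough orthogonal idempotents. I would therefore re-examine the proof of Theorem \ref{thm: bb1} and verify that the unit is not used in any essential way. That argument relies on the existence of short exact sequences $0 \to U_A \to V_A \to A \to 0$ with $V_A$ projective, the choice of generating sets $X_A \subseteq U_A$ and $Y_A \subseteq V_A$, the lifting criterion of Lemma \ref{lem: prep}, and cardinal arithmetic, all of which remain available in $\Mod R_\cC$. The only adjustment is that in the definition of a formal solution the free module $\bigoplus_{y} R$ should be replaced by the direct sum of projective generators $\bigoplus_y R_\cC e_{C(y)}$, where $C(y) \in \sS$ is the object at which the generator $y$ lives and $e_{C(y)} \in R_\cC$ is the corresponding orthogonal idempotent.

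With this adjustment the two-step construction of Theorem \ref{thm: bb1}, namely the definition of heights and realisations and then the transfinite induction producing partitions of $\kappa$, carries over unchanged. The main obstacle, and the step requiring most care, is the re-verification of the cardinal estimate bounding the number of formal solutions: one has to confirm that after absorbing the idempotent index $C(y) \in \sS$ the total count is still at most $\aleph$. This is handled by choosing $\aleph$ regular and strictly larger than $\lvert R_\cC \rvert + \lvert \sS \rvert + \lvert \aA \rvert + \sup_{A \in \aA} \lvert X_A \rvert$, so that $\aleph \cdot \lvert \sS \rvert = \aleph$ and the estimate $(\aleph \cdot \lvert R_\cC \rvert)^{\lvert X_A \rvert} \leq \aleph$ of the original proof remains valid. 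Once this is in place the remainder of the argument, including the contradiction with the chain of formal solutions of length $\Xi^+$, is identical, and pulling the resulting pure submodule back through $S^{-1}$ completes the proof.
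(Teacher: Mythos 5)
Your proposal follows essentially the same route as the paper: reduce to $\Mod R_\cC$ via Gabriel's equivalence (using that purity is categorical by Lemma \ref{lem: prep} and that $S$ preserves cardinalities), and then check that the proof of Theorem \ref{thm: bb1} survives the passage from a unital ring to the ring with enough orthogonal idempotents $R_\cC$. The only difference is one of emphasis in that verification --- you focus on adjusting the free modules in the definition of formal solutions and on the resulting cardinal count, while the paper highlights that local units guarantee the constructed submodules are non-trivial and properly growing --- but both are routine checks and your argument is correct.
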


\begin{proof}
%
By Lemma \ref{lem: prep} being a $\sigma$-pure subobject is a categorical property. We denote by $S(\aA)$ the set of objects $\setm{S(A)}{A \in \aA}$ and let $\sigma_{S(\aA)}$ be the purity generated by $S(\aA)$ in $\Mod (R_\cC)$. In this category Lemma \ref{lem: prep} holds and since $S$ is an equivalence, $L$ will contain a non-trivial $\sigma$-pure subobject of $M$ if and only if $S(L)$ contains a non-trivial $\sigma_{S(\aA)}$-pure submodule of $S(M)$. Now $\lvert L\rvert =\lvert S(L)\rvert $ and $\lvert M\rvert =\lvert S(M)\rvert$ holds, and we just need to prove theorem \ref{thm: bb1} for non-unital rings with enough orthogonal idempotents:
\\ Reconsidering step one of \ref{thm: bb1}, clearly our definitions did not depend on the the ring to be unital. Since $\Mod R_\cC$ also has enough projectives there is no need to change anything in this step.\\
Continuing with the second step, the proof uses on several occasions properties of modules over a ring with a unit. There is just one issue which has to be addressed:\\
We have to assure that the submodules constructed in the second step are non trivial, resp. that the construction yields larger submodules. Since for every (non trivial) element there is a local unit, it is straight forward to check that this is indeed the case. 
Thus the second step of theorem \ref{thm: bb1} may be carried out without any changes. The other arguments used in the second step are purely set theoretic in nature and thus hold without any changes.\\
In conclusion our investigation shows that theorem \ref{thm: bb1} also holds for the (non unital) ring with enough idempotents $R_\cC$.
\end{proof}

We shall now apply theorem \ref{gen_thm} to the concept of $\alpha$-pure morphisms. 

\section{Application: \texorpdfstring{$\alpha$}{alpha}-pure subobjects} \label{sect: appl}

Recall that an additive category is \emph{locally presentable} if it is cocomplete and admits a a generating set of objects that are $\alpha$-presentable for some regular cardinal $\alpha$ (cf. \cite{fp1994}). An object $X$ is called \emph{$\alpha$-presentable} if the representable functor $\Hom (X,-)$ preserves $\alpha$-filtered colimits. It is a well known fact that the category of modules $\Mod \cC$ for every small additive category $\cC$ is locally presentable (since the representable functors form a set of $\aleph_0$-presentable objects)

\begin{defn}
Let $\alpha$ be some infinite regular cardinal. A morphism $f \colon X \rightarrow Y$ in $\Mod \cC$ is called
	\begin{compactenum}
	\item  \emph{$\alpha$-pure monomorphism} if it is the $\alpha$-filtered colimit of split monomorphisms. 
	\item  \emph{$\alpha$-pure quotient}  if it is the $\alpha$-filtered colimit of split epimorphisms.  
	\end{compactenum}
If $X \rightarrow Y$ is an $\alpha$-pure monomorphism, we call $X$ \emph{$\alpha$-pure subobject} of $Y$. 	
\end{defn}

Our notion of $\alpha$-pure subobjects and $\alpha$-pure quotients coincides with the usual definitions of such objects (cf. \cite{pureqout}) in $\alpha$-accessible categories. The category $\Mod \cC$ is a $\aleph_0$-locally presentable as we already remarked. Thus by \cite[Remark after Thm. 1.20]{fp1994} it is a $\lambda$-locally presentable category for each regular infinite cardinal $\lambda \geq \aleph_0$. Locally $\alpha$-presentable categories are $\alpha$-accessible. In $\Mod \cC$ our definition coincides with the usual definition of $\alpha$-pure objects and we may thus freely use the results from \cite{pureqout} on pure morphisms.\\
Fix an arbitrary infinite regular cardinal $\alpha$. The category $\Mod \cC$ is locally $\alpha$-presentable. Thus there is a set $\Pres_\alpha$ of representatives of all $\alpha$-presentable objects in $\Mod \cC$. We consider the purity $\sigma_\alpha$ projectively generated by $\Pres_\alpha$. 

\begin{lem}
For every infinite regular cardinal $\alpha$, the class of all $\sigma_\alpha$-pure subobjects coincides with the class of all $\alpha$-pure subobjects.
\end{lem}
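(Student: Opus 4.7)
My plan is to bridge the two notions via the standard square-filling characterization of $\alpha$-pure monomorphisms in locally $\alpha$-presentable categories (Ad\'amek--Rosick\'y, cf.\ \cite{fp1994}): a monomorphism $f\colon X \to Y$ in $\Mod \cC$ is $\alpha$-pure if and only if for every commutative square
\begin{displaymath}
	\begin{xy}
	\xymatrix{
		P \ar[r]^u \ar[d]_g & X \ar[d]^f \\
		Q \ar[r]^v & Y
	}
	\end{xy}
\end{displaymath}
with $P, Q \in \Pres_\alpha$ there exists a diagonal $t\colon Q \to X$ satisfying $t g = u$.

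For the easy direction ($\alpha$-pure $\Rightarrow$ $\sigma_\alpha$-pure) I would present the short exact sequence $\eta\colon 0 \to X \to Y \to Z \to 0$ as the $\alpha$-filtered colimit of split short exact sequences $\eta_i$ coming from a presentation of $X \to Y$ as an $\alpha$-filtered colimit of split monomorphisms (the cokernel is obtained as the corresponding colimit since filtered colimits are exact). For $A \in \Pres_\alpha$ the functor $\Hom(A,-)$ preserves $\alpha$-filtered colimits, so $\Hom(A,\eta)$ is the $\alpha$-filtered colimit of the split short exact sequences $\Hom(A,\eta_i)$ and hence itself short exact. Thus $\eta$ is $\sigma_\alpha$-pure.

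The other direction ($\sigma_\alpha$-pure $\Rightarrow$ $\alpha$-pure) is the substantive one. Write $0 \to X \xrightarrow{f} Y \xrightarrow{\pi} Z \to 0$ for the short exact sequence determined by a $\sigma_\alpha$-pure inclusion and fix a test square as above with $P, Q \in \Pres_\alpha$. Since $\Pres_\alpha$ is closed under finite colimits, the cokernel $A \coloneq \Coker(g\colon P \to Q)$ again lies in $\Pres_\alpha$; let $q\colon Q \to A$ be the projection. Because $\pi v g = \pi f u = 0$, the composite $\pi v$ factors uniquely as $\bar v\, q$ for some $\bar v\colon A \to Z$. By $\sigma_\alpha$-purity, $\Hom(A,\pi)$ is surjective, so we may pick $w\colon A \to Y$ with $\pi w = \bar v$. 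The difference $v - w q \colon Q \to Y$ satisfies $\pi(v - w q) = 0$ and hence factors through $X$ as $f t$ for a unique $t\colon Q \to X$. Using $q g = 0$, we compute $f t g = v g - w q g = v g = f u$, and cancelling the monomorphism $f$ gives $t g = u$, verifying the square-filling criterion.

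The main obstacle lies in this second direction, and the decisive idea is to recognise that passing to the cokernel $A = \Coker(g)$ converts the square-filling problem for $\alpha$-pure monomorphisms into the surjectivity of $\Hom(A,\pi)$, which is precisely what $\sigma_\alpha$-purity supplies. The remaining ingredients---closure of $\Pres_\alpha$ under finite colimits and a short diagram chase---are purely formal.
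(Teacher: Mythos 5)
Your proof is correct, but it takes a more self-contained route than the paper. The paper's proof is essentially citation-based: it invokes \cite[Proposition 5]{pureqout} ($\alpha$-pure subobjects are exactly the kernels of $\alpha$-pure quotients) and \cite[Proposition 3]{pureqout} ($\alpha$-pure quotients are exactly the maps projective with respect to all $\alpha$-presentable objects), and the lemma then follows by unwinding definitions. You instead work directly from the square-filling characterization of $\alpha$-pure monomorphisms and prove both implications by hand: the easy direction by writing $\eta$ as an $\alpha$-filtered colimit of split short exact sequences and using that $\Hom(A,-)$ preserves $\alpha$-filtered colimits, and the hard direction by the cokernel trick $A = \Coker(g)$, which converts the diagonal-filling problem into surjectivity of $\Hom(A,\pi)$. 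That trick is precisely the mathematical content of the cited Proposition 3 specialized to the abelian setting, so in effect you have re-proved the external input rather than quoted it. What your version buys is independence from the pure-quotient machinery (only the closure of $\alpha$-presentables under finite colimits and exactness of filtered colimits in $\Mod \cC$ are needed); what the paper's version buys is brevity and the explicit link to the kernel/cokernel duality between pure subobjects and pure quotients, which it reuses in both directions. One small point to make explicit in your write-up: $\sigma_\alpha$ is generated by a \emph{set of representatives} $\Pres_\alpha$, so you should note that surjectivity of $\Hom(A,\pi)$ for $A$ in that set implies it for every $\alpha$-presentable $A$ up to isomorphism, in particular for your cokernel $\Coker(g)$; this is immediate but worth a half-sentence.
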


\begin{proof}
The category $\Mod \cC$ is an abelian $\alpha$-accessible category thus by \cite[Proposition 5]{pureqout}, $\alpha$-pure subobjects are precisely the kernels of $\alpha$-pure quotients and $\alpha$-pure quotients are precisely the cokernels of $\alpha$-pure monomorphisms. We will use this knowledge to prove our claim. 

\paragraph{}Let $L \unlhd M$ be an arbitrary $\sigma_\alpha$-pure subobject and consider the short exact sequence 
	\begin{displaymath}
		\eta \colon 0  \rightarrow L \stackrel{\iota}{\rightarrow} M \stackrel{\pi}{\rightarrow} C \rightarrow 0
	\end{displaymath}
 where $C$ is the cokernel of the inclusion. Since $L$ is $\sigma_\alpha$-pure, for every $A \in \Pres_\alpha$, $\Hom_{\Mod \cC} (A, \eta)$ is a short exact sequence in $\Ab$. In other words, the cokernel $C$ is projective with respect to every $A \in \Pres_\alpha$ (cf. \cite[Definition 1]{pureqout}). An application of \cite[Proposition 3]{pureqout} now implies that $\pi\colon M \rightarrow C$ is an $\alpha$-pure quotient. Since $\iota$ is the kernel of $\pi$, our previous observations imply that $L$ is an $\alpha$-pure subobject.\\
Conversly let $\iota \colon L \rightarrow M$ be an $\alpha$-pure monomorphism, then its cokernel $\pi \colon M \rightarrow C$ is a $\alpha$-pure quotient. Again by \cite[Proposition 3]{pureqout} $\alpha$-pure quotients are projective with respect to every object in $\Pres_\alpha$ . Thus considering again the short exact sequence $\eta$ from above, $\Hom_{\Mod \cC} (A, \eta)$ is an exact sequence for every $A \in \Pres_\alpha$. By definition $L$ must be a $\sigma_\alpha$-pure submodule.  
 \end{proof}
 
Clearly Theorem \ref{gen_thm} is applicable to $\sigma_\alpha$. Combining the theorem and our last lemma we derive the following corollary

\begin{cor}
Let $\alpha$ be an arbitrary infinite regular cardinal and $\lambda$ be an arbitrary cardinal. Then there is a cardinal $\kappa$, such that for every $M \in \Ob \Mod \cC$ and $L\, \unlhd\, M$ with $\lvert M\rvert \geq \kappa$ and $\lvert M/L \rvert \leq \lambda$ the submodule $L$ contains a non-trivial $\alpha$-pure subobject of $M$.
\end{cor}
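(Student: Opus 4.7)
The plan is very short: the corollary is essentially a direct combination of Theorem \ref{gen_thm} with the preceding Lemma. First I would verify that Theorem \ref{gen_thm} is applicable to the purity $\sigma_\alpha$. This requires checking that $\sigma_\alpha$ is projectively generated by a \emph{set} of modules, not merely a class. Since $\Mod \cC$ is locally $\alpha$-presentable (as noted in the text, using that the representable functors form a generating set of $\aleph_0$-presentable, hence $\alpha$-presentable, objects), the $\alpha$-presentable objects admit a set $\Pres_\alpha$ of representatives. By construction $\sigma_\alpha$ is projectively generated by this set, so Theorem \ref{gen_thm} applies.

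Next I would invoke Theorem \ref{gen_thm} for the cardinal $\lambda$ from the hypothesis: this yields a cardinal $\kappa$ such that whenever $M \in \Mod \cC$ and $L\, \unlhd\, M$ satisfy $\lvert M\rvert \geq \kappa$ and $\lvert M/L \rvert \leq \lambda$, the submodule $L$ contains a non-trivial $\sigma_\alpha$-pure submodule $L' \,\unlhd\, M$.

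Finally I would translate the conclusion from $\sigma_\alpha$-purity to $\alpha$-purity via the lemma proved immediately before the corollary: the class of $\sigma_\alpha$-pure subobjects of $M$ coincides with the class of $\alpha$-pure subobjects of $M$. Hence $L'$ is a non-trivial $\alpha$-pure subobject of $M$ contained in $L$, which is precisely the statement of the corollary.

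There is no real obstacle here; the entire argument is essentially a bookkeeping step combining the two previous results. The only point worth spelling out explicitly is the set-theoretic subtlety that $\Pres_\alpha$ can be chosen to be a set (a consequence of local $\alpha$-presentability), which is what allows Theorem \ref{gen_thm} to apply verbatim.
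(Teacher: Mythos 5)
Your proposal is correct and matches the paper's own argument exactly: the paper likewise notes that Theorem \ref{gen_thm} applies to $\sigma_\alpha$ (since $\Pres_\alpha$ is a set by local $\alpha$-presentability) and then invokes the preceding lemma to identify $\sigma_\alpha$-pure subobjects with $\alpha$-pure subobjects. No issues.
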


\bibliographystyle{alpha}
\newpage
\phantomsection{}
\addcontentsline{toc}{section}{References}
\bibliography{Notemain}

\end{document}